\newtheorem{Thm}{Theorem}[section]
\newtheorem{Cor}{Corollary}[section]
\newtheorem{Prop}{Proposition}[section]
\newtheorem{Lem}{Lemma}[section]
\newtheorem{Exa}{Example}[section]
\newcommand{\Top}{\ensuremath{\mathfrak{T}}}
\newcommand{\SU}{\ensuremath{\EuScript U}}
\newcommand{\SUa}[1]{\ensuremath{\EuScript U_{#1}\ }}
\newcommand{\PS}{\ensuremath{\EuScript P}}
\renewcommand{\star}[1]{\ensuremath{{\fourIdx{*}{}{}{}{#1}}}}
\newcommand{\st}{\ensuremath{\mathfrak{st}}}
\newcommand{\R}{\ensuremath{\mathbb R}}
\newcommand{\N}{\ensuremath{\mathbb N}}
\newcommand{\Z}{\ensuremath{\mathbb Z}}
\newcommand{\compl}[1]{\ensuremath{#1^\complement}}
\title{Egyptian fractions on groups}
\author{David A. Ross}
\address{Department of Mathematics, University of Hawai'i, Honolulu, Hawai'i 96822, USA}
\email{ross@math.hawaii.edu}
\date{}
\begin{document}

\maketitle


\renewcommand{\thefootnote}{}

\footnote{2020 \emph{Mathematics Subject Classification}: Primary 11D68, 11D85, 26E35, 54J05; Secondary 11A67, 11B75, 54H11.}

\footnote{\emph{Key words and phrases}: Egyptian fractions, topological groups, nonstandard analysis.}

\renewcommand{\thefootnote}{\arabic{footnote}}
\setcounter{footnote}{0}

\begin{abstract}
Results about the structure of the set of Egyptian fractions on the line are extended to subsets of topological groups.
\end{abstract}

\section{Introduction}

An \emph{Egyptian fraction} is a sum of unitary fractions
\[\frac{1}{m_1}+\cdots+\frac{1}{m_n}\](where $n, m_i\in\N$).
These were introduced into western mathematics by Fibonacci in 1202 \cite{pisa}, and have been a rich source of mathematical problems ever since.  (For a survey, see David Eppstein's excellent website \cite{Eppstein}.)

Inspired by results of Kellogg~\cite{Kellogg} and Curtiss~\cite{Curtiss}, in 1956 Sierpinski~\cite{Sierpinski} published several results about the structure of the set of Egyptian fractions.  Recently Nathanson~\cite{Nath} has extended these results to more general set of real numbers, and the author~\cite{Ross} showed that working with field extensions of $\R$ makes it possible to simplify and extend Sierpinski's results.  A similar approach was adopted by G\"{o}ral and Sertba\c{s}~\cite{Goral}.

In this note we use the methods of \cite{Ross} to extend these results to topological groups.  The extension is interesting not only because of the increased abstraction possible, but also because the proofs from the earlier papers, which rely heavily on the linear order and metric on $\R$, do not apply in this setting.

\subsection{Note on the proofs}

The proofs in this paper were discovered using nonstandard analysis \cite{Davis, Edinburgh}, and we retain the notation of that technology.  However, the amount of nonstandard analysis used is very slight, and can be replaced by working in the setting of any sufficiently saturated elementary extension of the group, for example an ultrapower, which might be familiar to more readers.  For completeness we review ultrapowers of groups and the notation in Section~\ref{reviewofultrapowers}.

\section{Statement of results}

Fix a topological group $\langle G, 0, +, -\rangle$;  that means that $G$ has a Hausdorff topology $\Top$ for which the addition and additive inverse operations are continuous.  While we will write this as an additive group, we will generally not assume that addition is abelian.

Later we will considered \emph{ordered} topological groups, which means that there is a partial order on $G$ which also respects the
operations (but not necessarily the topology).

Call a subset $T$ of $G$ \emph{locally cofinite at} $0$ (LCF0) if
\begin{enumerate}
\item $0\notin T$; and
\item for every open neighborhood $u$ of $0$, $T_u:=T\setminus u$ is finite.
\end{enumerate}

Fix $T_1, T_2,\dots \subseteq G$ locally cofinite at $0$.  Define
\begin{align*}
E_n&:=T_1+\cdots+T_n\\
&=\{a_1+a_2+\cdots+a_n\ :\ a_i\in T_i\}
\end{align*}
(which we can think of as n-term generalized Egyptian fractions generated by $T_1,\dots,T_n$).

\begin{Exa} For every  $i$, let $T_i=\{\frac{1}{m}\ :\ m\in\N\}\subseteq\R$. Then $E_n$ is the set of (classical) n-term Egyptian fractions in $\R$.
\end{Exa}

\begin{Exa} For every  $i$ let  $A_i\subset\R^+$ be finite, $B_i\subset\R^+$ be discrete, and put $T_i=\{\frac{a}{b}\ :\ a\in A_i, b\in B_i\}$.  The set $E_n$ is then the set of \emph{weighted real Egyptian numbers} considered by Nathanson in~\cite{Nath}.  Note that if $\epsilon>0$, $$T_i\setminus (0, \epsilon)=\bigcup_{a\in {A_i}}\{\frac{a}{b} : b\in B_i, b<\frac{a}{\epsilon}\}$$
which is finite, verifying that $T_i$ is LCF0.
\end{Exa}

Sierpinski somehow didn't notice the following:

\begin{Thm}\label{ThmCPCT}For $n>0$ the set $E'_n:=T'_1+\cdots+T'_n$ is compact (where $A'=A\cup\{0\}$)
\end{Thm}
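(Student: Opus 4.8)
The plan is to obtain compactness of $E'_n$ for free, by exhibiting it as a continuous image of a compact set. Everything reduces to a single observation about the factors: \emph{each $T'_i=T_i\cup\{0\}$ is compact}. Granting this, the finite product $T'_1\times\cdots\times T'_n$ is compact, and since $G$ is a topological group the addition map $\sigma\colon G^n\to G$, $\sigma(a_1,\dots,a_n)=a_1+\cdots+a_n$, is continuous; hence its image $E'_n=\sigma(T'_1\times\cdots\times T'_n)$ is compact as well.

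So the heart of the matter — and the step I expect to be the only nontrivial one — is the compactness of $T'_i$. This is exactly where the LCF0 hypothesis earns its keep: adjoining the single point $0$ fills in the unique possible point of accumulation of $T_i$. Concretely, I would argue directly from open covers. Given an open cover $\mathcal V$ of $T'_i$, choose a member $V\in\mathcal V$ with $0\in V$. Then $V$ is an open neighborhood of $0$, so by LCF0 the set $T_i\setminus V$ is finite; since $0\in V$ we have $T'_i\setminus V=T_i\setminus V$, a finite set, which is covered by finitely many further members of $\mathcal V$. These together with $V$ form a finite subcover, so $T'_i$ is compact.

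Beyond this I anticipate no real difficulty: the product step needs only the elementary finite case of Tychonoff's theorem, and the image step is immediate from continuity of $\sigma$. It is worth recording that, because $G$ is Hausdorff, compactness of $E'_n$ also yields that $E'_n$ is closed — a fact likely to be convenient later.

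In keeping with the nonstandard framework of the paper, the same proof can be phrased via the criterion that a set $K$ is compact iff every point of $\star{K}$ is infinitely close to a point of $K$. The only input needed is that every $a\in\star{T_i}$ is near-standard with $\std{a}\in T'_i$: if $a$ lies in the monad of $0$ then $\std{a}=0\in T'_i$; otherwise some standard open neighborhood $u$ of $0$ excludes $a$, and by LCF0 the set $T_i\setminus u$ is finite, hence standard, so $a\in\star{T_i}\setminus\star{u}=T_i\setminus u$, forcing $a\in T_i$. Applying $S$-continuity of addition to an arbitrary $a_1+\cdots+a_n\in\star{E'_n}$ then gives $a_1+\cdots+a_n\approx\std{a_1}+\cdots+\std{a_n}\in E'_n$, which is precisely compactness of $E'_n$.
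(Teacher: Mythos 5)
Your proposal is correct, and its primary argument takes a genuinely different route from the paper's. The paper argues entirely on the nonstandard side: given $x\in\star{E'_n}$, it decomposes $x=x_1+\cdots+x_n$ with $x_i\in\star{T'_i}$ (Property G3), invokes Proposition~\ref{STD0} to see that each $x_i$ is nearstandard with standard part in $T'_i$, and concludes $\st(x)\in E'_n$, so the Robinson criterion (T4) gives compactness in one stroke. You instead factor the theorem through three standard facts: each $T'_i$ is compact (your open-cover argument from LCF0 is exactly right --- any member of the cover containing $0$ already swallows all but finitely many points), finite products of compact spaces are compact, and the image of a compact set under the continuous $n$-fold addition map $G^n\to G$ is compact. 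What your route buys is elementarity and precision about hypotheses: it needs no ultrapower machinery, uses only continuity of addition, and makes clear that Hausdorffness enters only to get the closedness of $E'_n$ that the paper's corollaries rely on. What the paper's route buys is economy within the article: Proposition~\ref{STD0} and the standard-part bookkeeping it sets up are reused in the proofs of Theorems~\ref{E3finite}, \ref{Finite} and \ref{Decrease}, so the nonstandard proof of compactness comes essentially for free once that infrastructure is in place. Note finally that your closing nonstandard paragraph is the paper's own proof in all but wording: your monad argument that every element of $\star{T_i}$ is nearstandard with standard part in $T'_i$ is precisely Proposition~\ref{STD0}, and your appeal to $S$-continuity of addition together with the nonstandard compactness criterion is exactly the paper's use of Properties T1, T2 and T4.
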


In the case of $G=\R$ this immediately implies Sierpinski's result \cite[Th\'{e}or\`{e}m 2]{Sierpinski}:

\begin{Cor} The set of n-term Egyptian fractions is nowhere dense in $\R$.
\end{Cor}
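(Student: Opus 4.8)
The plan is to read off nowhere-density from the compactness supplied by Theorem~\ref{ThmCPCT}, combined with the elementary fact that every classical Egyptian fraction is rational. Recall that a set $S\subseteq\R$ is nowhere dense exactly when its closure $\overline{S}$ has empty interior, so the goal is to trap $\overline{E_n}$ inside a closed set that contains no interval.

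First I would note that, since here each $T_i=\{\tfrac1m:m\in\N\}\subseteq T'_i$, we have $E_n\subseteq E'_n$. By Theorem~\ref{ThmCPCT} the set $E'_n$ is compact, hence closed in $\R$, and therefore $\overline{E_n}\subseteq E'_n$. This is the one place the theorem is used, and it is the crux of the argument: $E_n$ itself is \emph{not} closed—for instance $\tfrac1m\to0$ shows $0\in\overline{E_1}\setminus E_1$—so one cannot argue with $E_n$ directly. Passing to $E'_n$, which absorbs precisely the limit points created by letting denominators tend to infinity, is exactly what repairs this, and identifying this move is the main (indeed the only) obstacle; everything else is routine.

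Next I would observe that every element of $E'_n$ is a finite sum $\tfrac{1}{m_1}+\cdots+\tfrac{1}{m_k}$ with $k\le n$ (the vanishing summands corresponding to the choice $0\in T'_i$), so $E'_n\subseteq\Q$. Since the irrationals are dense in $\R$, no nonempty open interval lies in $\Q$; hence $\Q$, and a fortiori its subset $E'_n$, has empty interior. Combining this with the previous paragraph gives $\overline{E_n}\subseteq E'_n\subseteq\Q$, so $\overline{E_n}$ has empty interior, which is precisely the assertion that $E_n$ is nowhere dense in $\R$.
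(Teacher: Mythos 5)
Your proof is correct and takes essentially the same approach as the paper: use Theorem~\ref{ThmCPCT} to trap $\overline{E_n}$ inside the compact (hence closed) set $E'_n$, then note that $E'_n$ is too small to contain an interval. The only cosmetic difference is that the paper rules out an interval via countability of $E'_n$ (an argument that also covers the weighted-Egyptian corollary, where elements need not be rational), whereas you use $E'_n\subseteq\Q$; for this corollary both work equally well.
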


It likewise implies Nathanson's \cite[Corollary 3]{Nath} more general result:

\begin{Cor} The set of n-term weighted Egyptian real numbers is nowhere dense in $\R$.
\end{Cor}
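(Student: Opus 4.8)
The plan is to read the corollary off from Theorem~\ref{ThmCPCT} together with one extra observation, namely that the ambient set is countable, exactly as in the classical case. First I would recall from the Example of weighted real Egyptian numbers above that, with $A_i\subset\R^+$ finite, $B_i\subset\R^+$ discrete, and $T_i=\{a/b : a\in A_i, b\in B_i\}$, the set of $n$-term weighted Egyptian real numbers is precisely $E_n=T_1+\cdots+T_n$, and that Example already verifies that each $T_i$ is LCF0. Theorem~\ref{ThmCPCT} then applies directly and yields that $E'_n=T'_1+\cdots+T'_n$ is compact; since $\R$ is Hausdorff, $E'_n$ is in particular closed, and clearly $E_n\subseteq E'_n$.

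Next I would check that $E'_n$ is countable. Each $B_i$ is a discrete subset of the second-countable space $\R$ and is therefore countable, while $A_i$ is finite; hence each $T'_i=T_i\cup\{0\}$ is countable, and a finite Minkowski sum of countable sets is countable (being the image of a finite product of countable sets under addition), so $E'_n$ is countable.

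Combining the two facts finishes the argument. A closed subset of $\R$ with nonempty interior would contain a nondegenerate interval, which is uncountable; since $E'_n$ is closed and countable it contains no such interval, so $\mathrm{int}(E'_n)=\emptyset$, i.e. $E'_n$ is nowhere dense. Because $E_n\subseteq E'_n$ we have $\overline{E_n}\subseteq\overline{E'_n}=E'_n$, whence $\mathrm{int}(\overline{E_n})\subseteq\mathrm{int}(E'_n)=\emptyset$, so $E_n$ is nowhere dense, as claimed.

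The genuine content sits entirely in Theorem~\ref{ThmCPCT}; once compactness (hence closedness) is available, the corollary is essentially immediate. The one step I would be careful not to skip is the countability of $E'_n$: compactness alone does not give nowhere density, since a compact set such as $[0,1]$ has nonempty interior, and it is countability that rules out intervals. Beyond assembling these two observations I do not anticipate any real obstacle.
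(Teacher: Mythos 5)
Your proposal is correct and follows essentially the same route as the paper: apply Theorem~\ref{ThmCPCT} to get that $E'_n$ is compact (hence closed), observe that $E'_n$ is countable so it cannot contain an interval, and conclude nowhere density. The paper compresses the countability step into ``it is countable from the definition,'' while you justify it via discreteness of the $B_i$ in the second-countable space $\R$; this is a worthwhile detail to make explicit, but it is the same argument.
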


\begin{proof}[Proof of corollaries] Otherwise $E'_n$ would contain an interval (since it is closed), but it is countable from the definition.
\end{proof}

The proofs of all the results in this section, including Theorem~\ref{ThmCPCT}, are deferred to Section~\ref{proofsection}

We note that while $E'_n$ is compact, $E_n$ might \emph{not} be nowhere dense for arbitrary groups; for example, if $G=\Z$ under addition and $T_i=\{1\}$ then $E_n=\{n\}$ which is dense in $\{n\}$.

However, the following does hold.
\begin{Thm}\label{NWD}Suppose $T_1,\dots,T_n$ are LCF0 and nowhere dense in $G$.  Then $E_n=T_1+\cdots+T_n$ is nowhere dense in $G$.
\end{Thm}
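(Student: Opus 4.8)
The plan is to argue by induction on $n$, the base case $n=1$ being precisely the hypothesis that $T_1$ is nowhere dense. So fix $n>1$ and assume that the sum of any fewer than $n$ of our LCF0 nowhere dense sets is nowhere dense. For $S\subseteq\{1,\dots,n\}$ write $E_S$ for the ordered partial sum $\sum_{i\in S}T_i$, so $E_{\{1,\dots,n\}}=E_n$ and $E_{\{1,\dots,n-1\}}=E_{n-1}$. First I would record the monad bookkeeping underlying Theorem~\ref{ThmCPCT}: since each $T_i$ is LCF0, any $a\in\star{T_i}$ is either infinitely close to $0$ or a standard point of $T_i$ (if $a\notin\star{u}$ for some standard neighbourhood $u$ of $0$, then $a\in\star{(T_i\setminus u)}=T_i\setminus u$, a finite standard set). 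Hence for $y=a_1+\cdots+a_n\in\star{E_n}$, setting $S=\{i:a_i\not\approx 0\}$, continuity of $+$ gives $\std{y}=\sum_{i\in S}a_i\in E_S$, so $\overline{E_n}\subseteq\bigcup_{S}E_S$. Putting $C:=\bigcup_{S\subsetneq\{1,\dots,n\}}E_S$, each summand is a sum of fewer than $n$ of the $T_i$ and so is nowhere dense by the inductive hypothesis; a finite union of nowhere dense sets is nowhere dense, so $C$ is nowhere dense and $\overline{E_n}\subseteq E_n\cup C$.

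Next, suppose for contradiction that $\overline{E_n}$ has nonempty interior $V$. Since $C$ is nowhere dense, $V_0:=V\setminus\overline{C}$ is a nonempty open set, and $V\subseteq E_n\cup C$ gives $V_0\subseteq E_n$; moreover $E_{n-1}=E_{\{1,\dots,n-1\}}\subseteq C$, so $V_0\cap\overline{E_{n-1}}=\emptyset$. I would pick a standard point $p\in V_0$ and write $\mu(p)$ for its monad; since $V_0$ is open and contained in $E_n$, we have $\mu(p)\subseteq\star{V_0}\subseteq\star{E_n}$, so every $y\approx p$ decomposes as $y=a+b$ with $a\in\star{E_{n-1}}$ and $b\in\star{T_n}$. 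If $b\approx 0$ in some such decomposition then $a=y-b\approx p$, forcing $p=\std{a}\in\overline{E_{n-1}}$, which is excluded; hence in every decomposition $b$ is a standard point of $T_n$ with $\std{a}=p-b\in\overline{E_{n-1}}$. The set $B:=\{b\in T_n:p-b\in\overline{E_{n-1}}\}$ must be finite, for otherwise it would accumulate at its only possible limit point $0$, and closedness of $\overline{E_{n-1}}$ would again give $p\in\overline{E_{n-1}}$.

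Writing $B=\{b_1,\dots,b_m\}$, the preceding paragraph shows $\mu(p)\subseteq\bigcup_{j}\star{(\overline{E_{n-1}}+b_j)}=\star{K}$, where $K:=\bigcup_{j=1}^{m}(\overline{E_{n-1}}+b_j)$. By the nonstandard criterion for interior points, $\mu(p)\subseteq\star{K}$ forces $p\in\mathrm{int}(K)$. But $\overline{E_{n-1}}$ is nowhere dense by the inductive hypothesis, each translate $\overline{E_{n-1}}+b_j$ is nowhere dense because translation is a homeomorphism, and so the finite union $K$ is nowhere dense and has empty interior — contradicting $p\in\mathrm{int}(K)$ and completing the induction.

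I expect the main obstacle to be exactly this control of the ``top'' summand $T_n$. Unlike on $\R$ one cannot invoke countability or the Baire property, and the sum of two nowhere dense sets need not be nowhere dense, so the entire argument must rest on the LCF0 hypothesis, which confines the last summand either to an infinitesimal (ruled out by $p\notin\overline{E_{n-1}}$) or to the finite set $B$. Verifying the finiteness of $B$ and the passage from $\mu(p)\subseteq\star{K}$ to $p\in\mathrm{int}(K)$ are the steps that need the most care.
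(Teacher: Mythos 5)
Your proof is correct, but it takes a genuinely different route from the paper's. The paper isolates a two-set lemma (Lemma~\ref{NWD0}): if $E$ is nowhere dense and $T$ is nowhere dense and LCF0, then $E+T$ is nowhere dense; Theorem~\ref{NWD} is then an immediate induction, peeling off one $T_i$ at a time, with no hypothesis needed on partial sums beyond nowhere-density of the running sum $E$. The lemma itself is proved by using saturation to produce a \emph{single} well-chosen point $g\in\star{(E+T)}\cap\mu(c)$ that avoids $\star{(\overline{E+T_w})}$ for every standard neighborhood $w$ of $0$; this forces the $T$-component of $g$ to be infinitesimal, whence $c\in\overline{E}$, a contradiction. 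You instead induct on $n$ directly and analyze the \emph{whole} monad $\mu(p)$: you first prove $\overline{E_n}\subseteq\bigcup_{S}E_S$ (note this union is exactly $E'_n$, so this step could also be cited off Theorem~\ref{ThmCPCT}, compactness implying closedness), use it to shrink the putative interior to an open $V_0\subseteq E_n$ with $V_0\cap\overline{E_{n-1}}=\emptyset$, then show the $T_n$-components of points of $\mu(p)$ form a finite standard set $B$, so that $\mu(p)\subseteq\star{K}$ with $K$ a finite union of translates of $\overline{E_{n-1}}$, and finish with the interior criterion. The two steps you flagged as delicate are both sound: finiteness of $B$ follows from continuity of $x\mapsto p-x$ together with $B\setminus u\subseteq T_n\setminus u$ being finite for every neighborhood $u$ of $0$, and the passage from $\mu(p)\subseteq\star{K}$ to $p\in\mathrm{int}(K)$ is a standard consequence of saturation (Property S3). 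Both proofs rest on the same pillars --- saturation, the dichotomy of Proposition~\ref{STD0}, and finite unions of translates of nowhere dense sets being nowhere dense --- but the paper's single-point trick avoids your covering argument and finiteness-of-$B$ step entirely and yields a leaner induction, while your route has the side benefit of exhibiting the structural containment $\overline{E_n}\subseteq E_n\cup C$, an explicit description of the possible limit points of $E_n$.

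One small repair is needed: your set $C$ contains the empty partial sum $E_\emptyset=\{0\}$ (and must, since all the $a_i$ may be infinitesimal), but $\{0\}$ is not covered by your inductive hypothesis, and it need not be nowhere dense: if $0$ is isolated, $\{0\}$ is open. This is easily patched: if $0$ is isolated then $G$ is discrete (translations are homeomorphisms), in which case the only nowhere dense subset of $G$ is $\emptyset$, so the hypothesis forces each $T_i=\emptyset$, $E_n=\emptyset$, and the theorem is trivial; otherwise no point of $G$ is isolated, so $\{0\}$ is closed with empty interior and your argument goes through as written.
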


Some other results from \cite{Sierpinski}, \cite{Nath}, and \cite{Ross} still hold in this setting, albeit with slight modification to the statement.

If $g\in E_n$, a \emph{representation} of $g$ in $E_n$ is a tuple $\langle g_1,\cdots, g_n\rangle\in T_1\times\cdots\times T_n$ such that $g=g_1+\cdots+g_n$.  Equivalently, it is a solution in $T_1\times\cdots\times T_n$ to the equation $x_1+\cdots+x_n=g$.

The following now generalizes \cite[Th\'{e}or\`{e}m 1]{Sierpinski}
\begin{Thm}\label{E3finite} Let $g\in E_3$.  Then either $g$ has only finitely many representations in $E_3$, or $g\in T_1\cup T_2\cup T_3$, or $g=0$.
\end{Thm}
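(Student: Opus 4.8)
The plan is to work inside a sufficiently saturated elementary extension $\star{G}$ (an ultrapower, as reviewed in Section~\ref{reviewofultrapowers}) and to convert ``infinitely many representations'' into the existence of a single \emph{nonstandard} representation. Let $R\subseteq T_1\times T_2\times T_3$ be the standard set of representations of the fixed standard element $g$. Since an infinite standard set acquires genuinely new (nonstandard) points in a sufficiently saturated extension, the hypothesis that $R$ is infinite yields an internal triple $\langle\gamma_1,\gamma_2,\gamma_3\rangle\in\star{R}$ with $\gamma_1+\gamma_2+\gamma_3=g$ and $\gamma_i\in\star{T_i}$, at least one of which is nonstandard (i.e.\ $\gamma_i\notin T_i$). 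Everything then reduces to analysing such a triple.

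The first key step is a coordinatewise dichotomy coming directly from LCF0, and essentially recording the content of Theorem~\ref{ThmCPCT}: every $\gamma\in\star{T_i}$ is either standard, i.e.\ in $T_i$, or infinitesimal, i.e.\ $\gamma\approx 0$ with $\std{\gamma}=0$. Indeed, for each standard open neighborhood $u$ of $0$ the set $T_i\setminus u$ is finite, hence equals $\star{T_i}\setminus\star{u}$, so a nonstandard $\gamma\in\star{T_i}$ must lie in $\star{u}$ for every such $u$, which is exactly the statement $\gamma\approx 0$. (Note that $0\notin T_i$ forces a standard element of $\star{T_i}$ to be non-infinitesimal, so the two alternatives are genuinely exclusive.) Applying this to each $\gamma_i$ and using that the standard part map is additive on nearstandard points — which is where continuity of $+$, and only that, enters, so that no commutativity is needed — I obtain $g=\std{\gamma_1}+\std{\gamma_2}+\std{\gamma_3}$, where each summand is either the standard element $\gamma_i\in T_i$ or $0$.

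It remains to count $m$, the number of infinitesimal coordinates, and here lies the main obstacle: ruling out $m=1$. If $m\le 1$ then at least two coordinates are standard, say $\gamma_1\in T_1$ and $\gamma_2\in T_2$; but then the group equation gives $\gamma_3=-\gamma_2-\gamma_1+g$, expressing $\gamma_3$ through standard elements, so $\gamma_3$ is standard as well, making the whole triple standard and contradicting the choice of a nonstandard representation. Hence $m\in\{2,3\}$. If $m=3$ then all standard parts vanish and $g=0$; if $m=2$ the unique standard coordinate $\gamma_j$ satisfies $g=\std{\gamma_j}=\gamma_j\in T_1\cup T_2\cup T_3$. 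In either case we land in one of the two exceptional conclusions, which completes the argument. The delicate point throughout is exactly this determinacy of one coordinate from the other two via the group equation, since it is what collapses the troublesome $m=1$ case; the remainder is just the LCF0 dichotomy together with continuity of the group operation.
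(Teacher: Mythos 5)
Your proof is correct and takes essentially the same route as the paper's: extract a nonstandard representation via Property S2, apply the LCF0 dichotomy (the paper's Proposition~\ref{STD0}), use additivity of $\st$, and split into cases according to how many coordinates are infinitesimal, ruling out the case of a single infinitesimal coordinate by solving the (possibly noncommutative) group equation for that coordinate. The only cosmetic difference is that you handle the two-standard-coordinates configuration ``up to relabeling'' with one solved formula, where the paper writes out all three solved forms explicitly; since the same move (adding inverses on the appropriate sides) works in each configuration, this is harmless.
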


For the remainder of this section we assume that $G$ is an \emph{ordered group}, which means that there is a partial order $\le$ on $G$ which honors the operations in the sense that if $a\le b$ and $c\le d$ then $a+c\le b+d$.  As usual, write $a<b$ if $a\le b$ and $a\neq b$.  The \emph{positive cone} of $G$ with respect to this order is $\{g\in G : 0< g\}$.

\begin{Thm}\label{Finite} Let $\langle G, 0, +, -, \le \rangle $ be an ordered topological group, and $T_1,T_2,\dots$ be LCF0 subsets of the positive cone of $G$.  Then the number of representations of any element of $E_n$ is finite.
\end{Thm}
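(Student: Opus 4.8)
The plan is to argue by contradiction in a sufficiently saturated extension $\star G$ (e.g.\ an ultrapower), exploiting two features in tension: for an LCF0 set every \emph{nonstandard} point of $\star{T_i}$ must be infinitely close to $0$, whereas the positive‑cone hypothesis forces every coordinate to be strictly positive. Suppose some $g\in E_n$ had infinitely many representations, and let $R\subseteq T_1\times\cdots\times T_n$ be the (standard) set of all of them, so $R$ is infinite. Then $\star R$ contains a point $\langle g_1,\dots,g_n\rangle$ that is not standard, so at least one coordinate $g_j$ is nonstandard. By transfer each $g_i\in\star{T_i}$, we have $\sum_i g_i=g$, and—since each $T_i$ lies in the positive cone—$g_i>0$ for every $i$ in the ordered group $\star G$.

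The key structural fact I would record first is the dichotomy for LCF0 sets: if $x\in\star{T_i}$ is nonstandard then $x\approx 0$. Indeed, were $x$ bounded away from $0$ it would avoid $\star u$ for some standard open neighborhood $u$ of $0$, whence $x\in\star{(T_i\setminus u)}$; but $T_i\setminus u$ is finite by LCF0, so $\star{(T_i\setminus u)}=T_i\setminus u$ consists only of standard points, contradicting the nonstandardness of $x$. Thus $g_j\approx 0$, and setting $I:=\{i:g_i\approx 0\}$ we have $j\in I$, so $I\neq\emptyset$; moreover each $g_i$ with $i\notin I$ is a genuine (standard) element of $T_i$, while each $g_i$ with $i\in I$ is a strictly positive infinitesimal.

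Now I would play the topology against the order. \emph{Topology:} every $g_i$ is near‑standard, with $\std{g_i}=g_i$ for $i\notin I$ and $\std{g_i}=0$ for $i\in I$; since $+$ is continuous the standard part commutes with the finite sum, so $g=\std g=\sum_i\std{g_i}=\sum_{i\notin I}g_i=:b$, where $b$ is a standard element that is therefore \emph{equal} to $g$. \emph{Order:} reconstruct the full sum from $b$ by reinserting the terms $g_i$, $i\in I$, one at a time; because each such term is strictly positive and $\le$ respects the operations, each insertion strictly increases the value (from $0<a$ one gets $x+y<x+a+y$ by cancellation), so $b<\sum_i g_i=g$. Then $b=g$ and $b<g$ are incompatible, which is the desired contradiction; hence $R$ is finite. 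Inserting the infinitesimal terms individually, rather than collecting them, is the only concession made to possible non‑commutativity.

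I expect the \textbf{main obstacle} to be precisely this simultaneous use of order and topology. The argument must keep the deleted term strictly positive—an order fact yielding $b<g$—while rendering it topologically negligible—a convergence fact yielding $b=g$. This is exactly where a naive limiting argument inside $G$ would fail: since the order is assumed to honor the operations but \emph{not} the topology $\Top$, strict inequalities need not survive passage to a limit, so one cannot conclude $b<g$ for a limit point $b$. The standard part map is what rescues the proof, converting ``infinitely close to $g$'' into honest equality with the standard element $g$ while leaving the strict order relation on the actual nonstandard witness intact.
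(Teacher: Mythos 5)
Your proposal is correct and follows essentially the same route as the paper's proof: use saturation (Property S2) to extract a representation of $g$ in $\star{T_1}+\cdots+\star{T_n}$ with a nonstandard coordinate, apply the LCF0 dichotomy (the paper's Proposition~\ref{STD0}), and derive the contradiction $g=\st(g_1+\cdots+g_n)<g_1+\cdots+g_n=g$ by playing the standard-part homomorphism against strict positivity of the infinitesimal coordinates. The only difference is organizational: the paper factors the order half into Proposition~\ref{STDless} (handling a single coordinate $g_i\notin T_i$ via a telescoping inequality), whereas you collect all infinitesimal coordinates into a set $I$ and reinsert them one at a time; both versions respect possible non-commutativity, so this is a matter of packaging rather than substance.
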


The final results generalize \cite[Th\'{e}or\`{e}m 3]{Sierpinski} (which Sierpinski attributes to Mycielski) and \cite[Theorem 1.2.2]{Ross}.

Say that the order $\le$ on $G$ \emph{respects the topology} provided for any $g\in G$ the set $\SUa{g}=\{x\in G : x<g\}$ is open. The order \emph{weakly respects the topology} provided for any $h<g$ in $G$ there is an open neighborhood $u$ of $h$ such that there is no $x\in u$ with $g<x$.  The first condition implies the second, and they agree on a linearly ordered group.

\begin{Thm}\label{Decrease}Let $\langle G, 0, +, -, \le \rangle $ be an ordered topological group where the order respects the topology, and
let $T_1,T_2,\dots$ be LCF0 subsets of the positive cone of $G$. Any infinite subset $S$ of $E_n$ contains a strictly decreasing sequence.
\end{Thm}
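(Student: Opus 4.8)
The plan is to work in a sufficiently saturated elementary extension $\star{G}$ and to exploit compactness together with the positivity of the $T_i$. Since $S$ is infinite, a sufficiently saturated extension contains a point $s\in\star{S}$ lying outside $S$; as any standard element of $\star{S}$ must lie in $S$, this $s$ is nonstandard. Writing a representation $s=s_1+\cdots+s_n$ with each $s_i\in\star{T_i}$, I would first take standard parts coordinatewise: because each $T_i':=T_i\cup\{0\}$ is compact (the key fact underlying Theorem~\ref{ThmCPCT}), every $s_i$ has a standard part $t_i:=\std{s_i}\in T_i'$, and hence $g:=\std{s}=t_1+\cdots+t_n\in E_n'$.

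The crux is to show $s>g$ while $s\approx g$ (infinitely close). First I would argue that every nonstandard coordinate must be infinitesimal: if $t_i\neq0$ then, since $t_i$ lies in the positive cone and $T_i$ is LCF0, $t_i$ is an isolated point of $T_i'$ (separate $t_i$ from $0$ by disjoint open sets; outside a neighborhood of $0$ only finitely many points of $T_i$ survive, and a finite Hausdorff set has only isolated points), so by transfer $s_i=t_i$ is standard. Thus any nonstandard $s_i$ has $t_i=0$, i.e. $s_i\approx0$ with $s_i>0$ (positivity transfers). As $s$ is nonstandard, at least one coordinate is nonstandard, so passing from $s_1+\cdots+s_n$ to $g$ replaces one or more strictly positive infinitesimals by $0$; the order axiom $a\le b,\ c\le d\Rightarrow a+c\le b+d$ then gives $s\ge g$, and the left cancellation law $x+a=x+b\Rightarrow a=b$ upgrades this to $s>g$. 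This comparison is the step I expect to be the main obstacle, precisely because it must be carried out term by term without a linear order or a metric, being careful that the group need not be abelian.

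With $s>g$ and $s\approx g$ in hand, a downward transfer yields the key local fact: for every standard open neighborhood $V$ of $g$ there is an $x\in S\cap V$ with $x>g$, since the nonstandard witness $s$ lies in $\star{V}$, satisfies $s>g$, and belongs to $\star{S}$. Finally I would build the strictly decreasing sequence by recursion. Start with any $x_1\in S$ having $x_1>g$. Given $x_1>\cdots>x_k$ in $S$ with each $x_j>g$, the set $\SUa{x_k}=\{y:y<x_k\}$ is open because the order respects the topology, and it contains $g$; applying the local fact to this neighborhood produces $x_{k+1}\in S$ with $g<x_{k+1}<x_k$. The resulting $x_1>x_2>\cdots$ is then a strictly decreasing sequence in $S$, and I note that this last step uses only the openness of $\SUa{x_k}$, which is exactly the hypothesis that the order respects the topology.
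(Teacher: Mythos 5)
Your proposal is correct and takes essentially the same route as the paper: your coordinate-wise analysis (nonstandard coordinates of $s$ are strictly positive infinitesimals, hence $\st(s)<s$ by summing inequalities and using group cancellation) is precisely the content of the paper's Propositions~\ref{STD0} and~\ref{STDless}, merely inlined rather than quoted. Your concluding recursion, using the openness of $\SUa{g_m}$ and downward transfer of the witness $s$ to extract each next term, is the paper's construction verbatim.
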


The following is immediate.

\begin{Cor}Let $\langle G, 0, +, -, \le \rangle $ be an ordered topological group where the order respects the topology, and
let $T_1,T_2,\dots$ be LCF0 subsets of the positive cone of $G$.  There does not exist a strictly increasing sequence in $E_n$.
\end{Cor}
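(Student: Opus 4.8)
The plan is to deduce this at once from Theorem~\ref{Decrease} by contradiction. Suppose toward a contradiction that $(g_k)_{k\in\N}$ is a strictly increasing sequence in $E_n$, so that $g_1<g_2<g_3<\cdots$. The terms are pairwise distinct, so $S:=\{g_k:k\in\N\}$ is an infinite subset of $E_n$, and Theorem~\ref{Decrease} then produces a strictly decreasing sequence $(h_j)_{j\in\N}$ whose terms all lie in $S$.

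First I would record the elementary fact that a strictly increasing sequence is a chain under $\le$. By transitivity, $i<j$ implies $g_i<g_j$; and since $\le$ is antisymmetric, if $g_i<g_j$ with $i\neq j$ then $j<i$ is impossible (it would give $g_j<g_i$, contradicting $g_i<g_j$). Hence for distinct indices $g_i<g_j$ holds exactly when $i<j$.

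Then I would translate the decreasing sequence back into its indices. Writing $h_j=g_{m_j}$, the chain of inequalities $h_1>h_2>h_3>\cdots$ becomes $g_{m_1}>g_{m_2}>g_{m_3}>\cdots$, which by the previous step forces $m_1>m_2>m_3>\cdots$. This is an infinite strictly decreasing sequence in $\N$, which cannot exist. The contradiction establishes that no strictly increasing sequence in $E_n$ is possible.

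The single point that deserves care — and the reason the corollary is only \emph{almost} a triviality — is that $\le$ is merely a partial order, so one must check that a strictly increasing sequence admits no infinite strictly decreasing subsequence without invoking linearity. That is precisely the chain observation above, and it is what lets the increasing hypothesis (as opposed to mere infinitude) defeat the decreasing sequence supplied by Theorem~\ref{Decrease}.
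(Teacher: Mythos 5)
Your proposal is correct and matches the paper's intent exactly: the paper gives no written proof, declaring the corollary ``immediate'' from Theorem~\ref{Decrease}, and your argument is precisely that deduction, with the only nontrivial point (that the range of a strictly increasing sequence in a \emph{partial} order is a well-ordered chain and so admits no infinite strictly decreasing sequence) properly spelled out.
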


\section{Ultrapowers of groups}\label{reviewofultrapowers}

Before proceeding to the proofs of the results in the previous section, we review the definition and some properties of the ultrapower of a group.  As everything in this section is well-known or an easy exercise, no proofs are included (but see the comments at the end of the section).

\subsection{Definitions}
Let $I$ be an infinite set and $\SU$ be an ultrafilter on $I$.  That is, $\SU\subseteq\PS(I)$ satisfies: (i)~$\emptyset\not\in\SU$;
(ii)~If $A\subseteq B$ and $A\in\SU$ then $A\in\SU$; (iii)~If $A, B\in\SU$ then $A\cap B\in\SU$; and
(iv) For every $A\subseteq I$ either $A\in\SU$ or $\compl{A}\in \SU$.

For example, if $a\in I$ is a fixed element then $\{A\subseteq I : a\in A\}$ is an ultrafilter on $I$, called a \emph{principal} ultrafilter.  Nonprincipal ultrafilters are usually ``constructed" using Zorn's Lemma.

If $G$ is a nonempty \emph{set} then denote by $G^I$ the set of functions from $I$ into $G$.  If $f,h\in G^I$ write $f\equiv_\SU h$ if
$\{i\in I : f(i)=h(i)\}\in\SU$.  Say that $f=g$ \emph{almost surely}, or as.

By the properties of $\SU$, $\equiv_\SU$ is an equivalence relation on $G^I$.
Denote the equivalence class of $f$ by $f/\SU$.  Write $\star{G}=G^I/\SU$ for the set of all such equivalence classes.
If $A\subseteq G$ write $\star{A}=A^I/\SU$.  Note this can be identified with $\{f/\SU : f\in G^I, f(i)\in A\textrm{ as}\}$.
More generally, if $A\subseteq G^n$ write $\star{A}=\{\langle f_1/\SU,\dots,f_n/\SU\rangle : f_1,\dots,f_n\in G^I, \langle f_1(i),\dots,f_n(i)\rangle\in A\textrm{ as} \}$.  Note $\star{A}\subseteq (\star{G})^n$.

If $g\in G$ (or, more generally, $G^n$) write $\star{g}$ for the unique element of $\star{\{g\}}$
If $g\neq h$ then $\star{g}\neq\star{h}$; thus $*$ is an embedding of $G$ into $\star{G}$, so we usually write $g$ instead of $\star{g}$ for $g\in G$.

Now, if $G$ is a \emph{group} with binary operation $+$, define $\star{+}:\star{G}\times \star{G}\to\star{G}$ by
$f/\SU\star{+}g/\SU=(f+g)/\SU$.
This is well-defined, and extends $+$ in the sense that if $a,b\in G$ then $\star{(a+b)}={a}\star{+}{b}$.
Likewise any other function from $G^n$ to $G$ (such as $-$), and any relation (such as a partial order $\le$)
on $G^n$ can be extended in a similar way.  It is reasonable to write $+, -, \le$ instead of $\star{+}, \star{-}$, and $\star{\le}$.

If a \emph{set} $G$ is equipped with a topology $\Top$, for $g\in G$ let:
$$\mu(g)=\bigcap\limits_{g\in u\in\Top}\star{u}=\{h\in\star{G} : h\in\star{u}\textrm{ for every open neighborhood }u\textrm{ of }g\}$$

\begin{Prop}If $\Top$ is Hausdorff then for all $a\neq b\in G$, $\mu(a)\cap\mu(b)=\emptyset$
\end{Prop}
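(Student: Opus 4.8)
The plan is to show that if some $h\in\star{G}$ belonged to both $\mu(a)$ and $\mu(b)$ for distinct $a,b\in G$, then we could manufacture a contradiction with the Hausdorff property of $\Top$. Since $G$ is Hausdorff and $a\neq b$, there exist disjoint open neighborhoods $u$ of $a$ and $v$ of $b$, so $u\cap v=\emptyset$. The whole proof hinges on transferring this disjointness up to the ultrapower and then reading off a contradiction.

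First I would record the one transfer fact I need about $\star{(\cdot)}$ on subsets, namely that it commutes with intersection: for any $A,B\subseteq G$ one has $\star{(A\cap B)}=\star{A}\cap\star{B}$, and in particular $\star{\emptyset}=\emptyset$. This is the standard \L o\'s-style observation that membership and the Boolean operations are preserved by the ultrapower construction, and it follows directly from the ultrafilter axioms (iii) and (iv) applied pointwise to representing functions; it is exactly the kind of ``easy exercise'' the section announces. Applying it to the disjoint neighborhoods gives $\star{u}\cap\star{v}=\star{(u\cap v)}=\star{\emptyset}=\emptyset$.

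Now I would finish by unwinding the definition of the monad $\mu$. By definition $h\in\mu(a)$ means $h\in\star{w}$ for every open neighborhood $w$ of $a$, so in particular $h\in\star{u}$; likewise $h\in\mu(b)$ forces $h\in\star{v}$. Hence $h\in\star{u}\cap\star{v}=\emptyset$, which is absurd. Therefore no such $h$ exists and $\mu(a)\cap\mu(b)=\emptyset$, as claimed.

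The argument is short and essentially formal, so there is no single hard computation; the only place requiring care is the transfer step $\star{u}\cap\star{v}=\star{(u\cap v)}$, since one must be sure the ultrapower of a set interacts correctly with finite intersections. I would prove this cleanly at the level of representing functions: if $f/\SU$ lies in both $\star{u}$ and $\star{v}$, then $\{i:f(i)\in u\}$ and $\{i:f(i)\in v\}$ are both in $\SU$, so by closure under intersection their intersection $\{i:f(i)\in u\cap v\}\in\SU$, giving $f/\SU\in\star{(u\cap v)}$; the reverse inclusion is immediate. Once this lemma is in hand, the Hausdorff hypothesis does all the remaining work.
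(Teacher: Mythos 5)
Your proof is correct. Note that the paper never actually proves this proposition: it appears in Section~\ref{reviewofultrapowers}, which the author prefaces by saying everything there is ``well-known or an easy exercise,'' with no proofs included. Your argument is the standard one and fits the paper's own toolkit exactly: the key step $\star{u}\cap\star{v}=\star{(u\cap v)}=\emptyset$ is precisely the finite-Boolean-operations property (S1) of Theorem~\ref{thm:properties}, and your verification of it at the level of representing functions is sound. One small remark: you cite ultrafilter axioms (iii) and (iv), but your actual argument uses only (iii) (closure under intersection) together with (i) (to get $\star{\emptyset}=\emptyset$); axiom (iv) is not needed for preservation of intersections, only for things like complements.
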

\noindent (In fact, the converse is true for most interesting ultrafilters, but false for, eg, principal ultrafilters.)

Write $$NS(G)=\bigcup\limits_{g\in G}\mu(g)=\{h\in\star{G} : h\in\mu(g)\textrm{ for some }g\textrm{ in }G\}.$$Define $\st : NS(G)\to G$ by $\st(h)=$ the unique $g\in G$ with $h\in\mu(g)$.  For $g,h\in NS(G)$, if $\st(g)=\st(h)$ write $g\approx h$.

\subsection{Properties}
Most of the significant properties of $\star{G}$ are listed in the following theorem.

\begin{Thm}\label{thm:properties} For any \emph{set} $G$ there exists an ultrafilter $\SU$ on a set $I$ with the following properties.
\begin{enumerate}[label=(S\arabic*)]
\item $* : \PS(G^n)\to\PS((\star{G})^n)$ respects finite Boolean operations: $\star{\emptyset}=\emptyset; \star{(G^n)}=(\star{G})^n$;
if $a\in A$ then $a\in\star{A}$; if $A\subseteq B$ then $\star{A}\subseteq \star{B}$;
$\star(A\cup B)=\star{A}\cup \star{B}; \star(A\cap B)=\star{A}\cap \star{B}; \star(A\setminus B)=\star{A}\setminus \star{B}$.

\item If $A$ is finite then $A=\star{A}$. If $A$ is infinite then $A\subsetneq\star{A}$.

\item If $A_j\subseteq G^n$ for $j$ in some index set $J$ and  $\{A_j\}_{j\in J}$ has the finite intersection property
then $\bigcap_{j\in J}\star{A_j}\neq\emptyset$

\end{enumerate}
\noindent If in addition $G$ is a group with operation $+$ and identity $0$ then also:
\begin{enumerate}[label=(G\arabic*)]
\item $\star{G}$ is a group with operation $\star{+}$ and identity $\star{0}$.
\item $*$ is an isomorphism of $G$ into $\star{G}$.
\item If $A, B\subseteq G$ then $\star{(A+B)}=\star{A}+\star{B}$ and $\star{(-A)}=-\star{A}$
\end{enumerate}

\noindent If in addition $G$ is an ordered group with partial order $\le$, then:

\begin{enumerate}[label=(A\arabic*)]
\item Suppose $A\subseteq G^n$ is the set of elements satisfying a given group equation or inequality (possibly with parameters from $G$).  Then $\star{A}$ is the subset of $\star{G^n}$ satisfying the same equation or inequality.
\end{enumerate}

\noindent If $G$ is a topological group with (Hausdorff) topology $\Top$, then:
\begin{enumerate}[label=(T\arabic*)]
\item $NS(G)$ is a subgroup of $\star{G}$.
\item $\st$ is a group homomorphism.
\item For $u\subseteq G$ the following are equivalent: (i)~$u$ is open; (ii)~$\st^{-1}(u)\subseteq\star{u}$; (iii)~for every $g\in u$, $\mu(g)\subseteq\star{u}$.
\item For $K\subseteq G$ the following are equivalent: (i)~$K$ is compact;  (ii)~$\star{K}\subseteq\st^{-1}(K)$; (iii)~for every $h\in\star{K}$ there is a $g\in K$ with $h\in\mu(g)$.
\end{enumerate}

\end{Thm}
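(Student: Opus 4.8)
The statement bundles \L o\'s-type transfer facts, a saturation property, and the standard nonstandard characterizations of openness and compactness, so the plan is to isolate the single substantive ingredient---the choice of $\SU$---and then dispatch the rest by routine pointwise arguments. Properties (S1), (G1)--(G3) and (A1) hold for \emph{any} ultrafilter on \emph{any} index set: each is proved by unwinding $\equiv_\SU$ and pushing the defining almost-sure membership condition through the ultrafilter axioms. For instance $\star{(A\cap B)}=\star A\cap\star B$ because $\{i:f(i)\in A\cap B\}$ is the intersection of $\{i:f(i)\in A\}$ and $\{i:f(i)\in B\}$ and $\SU$ is closed under intersection, while $\star{(A\setminus B)}$ and $\star{(A\cup B)}$ use the axiom that exactly one of $X,\compl X$ lies in $\SU$; clause (A1) is the instance of \L o\'s's theorem for a single atomic formula, and (G1)--(G3) are the cases for the graphs of $+$ and of $-$. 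The finite half of (S2) then follows from (S1) together with $\star{\{a\}}=\{a\}$.

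The infinite half of (S2) and all of (S3) are where $\SU$ must actually be chosen, and this is the main obstacle. I would take $I$ to be the set of finite subsets of $\PS(\bigcup_n G^n)$ and let $\SU$ be any ultrafilter refining the filter generated by the tails $\{i\in I:\SA\in i\}$; these tails have the finite intersection property, since finitely many of them all contain the single finite set listing the corresponding $\SA$'s. Any such $\SU$ is an \emph{enlargement}, and for it (S3) is immediate: given $\{A_j\}_{j\in J}$ with the finite intersection property, the function sending each $i$ to a chosen point of $\bigcap\{A_j:A_j\in i\}$ (nonempty by hypothesis) represents modulo $\SU$ an element lying in every $\star{A_j}$, because $\{i:A_j\in i\}\in\SU$. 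Nonprincipality, and hence the infinite half of (S2), falls out of the same construction. The delicate point---and the reason a naive ultrapower over $\N$ will not suffice---is that one fixed $\SU$ must serve (S3) for families indexed by an \emph{arbitrary} set $J$; the tail construction is exactly what secures this.

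With $\SU$ fixed, the topological clauses follow. For (T1)--(T2) I would use continuity of the operations: if $h_k\in\mu(g_k)$ for $k=1,2$, then for every open $w\ni g_1+g_2$ there are open $u_k\ni g_k$ with $u_1+u_2\subseteq w$, whence by (G3) and (S1) $h_1+h_2\in\star{u_1}+\star{u_2}\subseteq\star w$; as $w$ was arbitrary, $h_1+h_2\in\mu(g_1+g_2)$, and likewise $-h\in\mu(-g)$, so $NS(G)$ is a subgroup and $\st$ is a homomorphism. In (T3) and (T4) the two nonstandard conditions are trivially equivalent, and their derivation from the topological one is direct---(T3)(i)$\Rightarrow$(iii) because an open $u$ is a neighborhood of each of its points, and (T4)(i)$\Rightarrow$(iii) by covering $K$ with open sets whose stars avoid a given $h\in\star K$ and extracting a finite subcover. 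The one nontrivial direction in each is the converse, where (S3) is spent: for (T3)(ii)$\Rightarrow$(i), were some $g\in u$ not interior, the open neighborhoods of $g$ together with $\compl u$ would have the finite intersection property, so (S3) yields $h\in\mu(g)\cap\star{(\compl u)}=\mu(g)\setminus\star u$, contradicting $\mu(g)\subseteq\st^{-1}(u)\subseteq\star u$; and (T4)(ii)$\Rightarrow$(i) runs the same way on a family of relatively closed subsets of $K$ with the finite intersection property, using (S3) to produce a common point $h\in\star K\subseteq\st^{-1}(K)$ whose standard part lies in every member and in $K$. The Hausdorff hypothesis enters only through the preceding Proposition, which makes $\st$ single-valued and hence well defined.
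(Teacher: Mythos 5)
Your proposal is correct, and it fills a gap the paper leaves open on purpose: Section~\ref{reviewofultrapowers} presents this theorem with no proof at all, stating that everything there is ``well-known or an easy exercise'' and, in its closing comments, that the transfer clauses hold for \emph{any} ultrafilter by the {\L}o\'{s} theorem while (S3) is a weak form of $\kappa^+$-saturation, for which the reader is sent to \cite{ChangKeisler}. Your handling of the routine clauses --- (S1), (G1)--(G3), (A1) by unwinding $\equiv_\SU$ through the ultrafilter axioms, the finite half of (S2) from $\star{\{a\}}=\{a\}$, (T1)--(T2) from continuity of the group operations, and the equivalences in (T3)--(T4) with (S3) spent exactly on the two converse directions --- matches what the paper's comments indicate and is sound. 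Where you genuinely depart from the paper's suggested route is (S3) itself: instead of invoking $\kappa^+$-saturated ultrapowers (whose existence rests on the construction of good ultrafilters, the hardest result in that circle of ideas), you observe that (S3) only quantifies over families of \emph{standard} sets $A_j\subseteq G^n$, so an enlargement suffices, and you build one directly: $I$ the finite subsets of $\PS(\bigcup_n G^n)$, $\SU$ any ultrafilter extending the tail filter, and the choice function $i\mapsto f(i)\in\bigcap\{A_j : A_j\in i\}$ witnessing $\bigcap_j\star{A_j}\neq\emptyset$. That construction is standard, correct, self-contained, and strictly more elementary than what the paper cites; it delivers exactly what the theorem asserts and no more.

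One caveat on wording: nonprincipality of $\SU$ by itself does not yield the infinite half of (S2) --- if a countably complete nonprincipal ultrafilter exists (i.e., if there is a measurable cardinal), then every map into a countably infinite $A$ is almost surely constant, so $\star{A}=A$ for such $A$. What does yield it, and what your construction provides, is (S3) applied to the family $\{A\setminus\{a\}\}_{a\in A}$, combined with (S1) and $\star{\{a\}}=\{a\}$. Since you attribute the fact to ``the same construction'' rather than to nonprincipality as such, this is a slip of phrasing rather than a genuine gap, but that sentence should be repaired.
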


\subsection{Comments}
Most of the results of Theorem~\ref{thm:properties}, sometimes with a slight rewording, should be familiar to anyone familiar with ultrapowers of structures.  They are covered in any modern introduction to nonstandard analysis (for example \cite{Edinburgh} or \cite{Keisler}) or model theory (\cite{ChangKeisler}), or in stand-alone introductions to applications of ultrafilters in mathematics (such as \cite{BellSlomson} or \cite{Tao})

Many of the results are consequences of the L{\o}s Theorem (or \emph{transfer principal} in nonstandard analysis), and are true for any ultrafilter (even principal ones).  Property S3 is a weak form of $\kappa^+-$\emph{saturation}, where $\kappa$ is at least as large as any index set $J$ that might arise, but in this paper $\kappa$ is never larger than the cardinality of the power set of $G$. See \cite{ChangKeisler} for a discussion of conditions on ultrafilters that give rise to $\kappa^+-$saturated ultrapowers.

The definitions of $\mu(g)$ (the \emph{monad} of $g$), $NS(G)$ (the elements \emph{nearstandard} to $G$), and $\st$ (the \emph{standard part} map) can be made in any Hausdorff topological space, not just topological group, and properties (T3) and (T4) still hold.  Property (T4), sometimes called the \emph{Robinson compactness criterion}, is quite powerful and is the key to the simplicity of the proof of Theorem~\ref{ThmCPCT} in this paper.  Property (T2) codifies the continuity of the group operations.

For an example of A1, fix $a\in G$.  Then: $$\star{\{g\in G : g\le a\}}=\{g\in \star{G} : g\le a\}$$

\section{Proofs}\label{proofsection}

Start with a simple property of $\st$.

\begin{Prop}\label{STD0}For LCF0 $T$, if $x\in\star{T}$ then either $x\in T$ or $x\approx 0$.
\end{Prop}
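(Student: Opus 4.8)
The plan is to prove the disjunction by assuming its second alternative fails and deriving the first: I take $x\in\star{T}$ with $x\not\approx 0$ and show $x\in T$. Since $T\subseteq G$ consists of standard points, this gives exactly the claimed statement. (Note in passing that the two alternatives are in fact mutually exclusive: if $x\in T$ were also $\approx 0$ then $\st(x)=x=0$, contradicting clause (1) of LCF0 that $0\notin T$; but this observation is not needed for the argument.)

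First I would unpack $x\not\approx 0$. By the definition of $\st$ and of $\approx$, the relation $x\approx 0$ is equivalent to $x\in\mu(0)$, and $\mu(0)=\bigcap_{0\in u\in\Top}\star{u}$. Hence $x\notin\mu(0)$ furnishes a single open neighborhood $u$ of $0$ with $x\notin\star{u}$. This is the step where the topology enters, and isolating this separating neighborhood $u$ is the crux of the whole argument.

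Next I would feed this $u$ into the LCF0 hypothesis, which guarantees that $T_u=T\setminus u$ is finite. The transfer property (S1) lets me commute $*$ with set difference, so $\star{(T\setminus u)}=\star{T}\setminus\star{u}$. Since $x\in\star{T}$ while $x\notin\star{u}$, it follows that $x\in\star{T}\setminus\star{u}=\star{T_u}$. Finally, because $T_u$ is finite, property (S2) gives $\star{T_u}=T_u$, whence $x\in T_u\subseteq T$, as required.

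I do not anticipate a serious obstacle: the proof is simply the interplay of three ingredients — the monad description of $\approx 0$ to extract the neighborhood $u$, the finiteness clause of LCF0 applied to that particular $u$, and the two elementary transfer facts (S1) and (S2). The only point demanding a little care is the bookkeeping that $x\in\star{T}$ together with $x\notin\star{u}$ places $x$ inside $\star{(T\setminus u)}$, rather than merely outside $\star{u}$; once the Boolean identity $\star{(T\setminus u)}=\star{T}\setminus\star{u}$ is in hand, this is immediate.
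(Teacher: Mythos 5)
Your proof is correct and is essentially the paper's argument run in the contrapositive direction: the paper assumes $x\notin T$ and shows $x\in\star{u}$ for every neighborhood $u$ of $0$, while you assume $x\notin\mu(0)$, extract one separating neighborhood $u$, and land in the finite set $T_u$. Both rest on exactly the same ingredients --- the identity $\star{(T\setminus u)}=\star{T}\setminus\star{u}$ from (S1), the fixedness of finite sets under $*$ from (S2), and the LCF0 finiteness clause --- so there is no substantive difference.
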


\begin{proof}Suppose $x\notin T$.  If $u$ is any open neighborhood of $0$ then $x\notin T\setminus u = \star{T}\setminus\star{u}$ (by Property~S1), so $x\in\star{u}$.
\end{proof}

\begin{proof}[Proof of Theorem~\ref{ThmCPCT}]
Suppose
\begin{align*}x&=x_1+\cdots+x_n\\
&\in\star{(T'_1+\cdots+T'_n)}\\
&=\star{T'_1}+\cdots+\star{T'_n} \qquad \textrm{(by Property G3)}
\end{align*}
Since each $\star{T_i}\subseteq NS(G)$ by the previous proposition, $\star{T'_1}+\cdots+\star{T'_n}\subseteq NS(G)$ by Property T1.  Moreover, suppose
\[y_i=\left\{
                  \begin{array}{ll}
                    x_i, & \hbox{$x_i\in T_i$;} \\
                    0, & \hbox{otherwise.}
                  \end{array}
                \right.\]
Then $\st(x)=y_1+\cdots+y_n\in E'_n$.  Compactness of $E'_n$ now follows by Property T4.
\end{proof}

\begin{proof}[Proof of Theorem~\ref{E3finite}] Suppose $g\in E_3, g\neq 0$ has infinitely many representations. By Property~S2 there is a representation $g=g_1+g_2+g_3$ in $\star{T_1}+\star{T_2}+\star{T_3}$ with at least one $g_i\not\in T_i$.  There are three cases.
Case 1. $g_i\not\in T_i$ for all $i$.  Then $g=\st(g)=\st(g_1)+\st(g_2)+\st(g_3)=0$, a contradiction.
Case 2. $g_i\not\in T_i$ for exactly two of the three values of $i$.  Denoting the remaining value by $j$, by additivity of $\st$ and Proposition~\ref{STD0} we get $g=\st(g)=\st(g_1)+\st(g_2)+\st(g_3)=g_j\in T_j$.
Case 3. $g_i\not\in T_i$ for exactly one of the three values of $i$.  Denoting this value by $j$ and solving for $g_j$, we get
$$g_j=\left\{
  \begin{array}{ll}
    g-(g_2+g_3), & \hbox{j=1;} \\
    -g_1+g-g_3, & \hbox{j=2;} \\
    -(g_1+g_2)+g, & \hbox{j=3.}
  \end{array}\right.$$In any of these alternatives we have an element of $\star{T_j}\setminus T_j$, which must be a nonzero element of $\mu(0)$, equal to an element of $G$.  This can't happen.
\end{proof}

\begin{Prop}\label{STDless}Suppose $G$ is an ordered group and $T_1,T_2,\dots$ are LCF0 subsets of the positive cone of $G$.  (i)~If $x=x_1+\cdots+x_n\in \star{E_n}$ then $\st(x)\le x$.  (ii)~If in addition some $x_i\not\in T_i$ then  $\st(x)<x$
\end{Prop}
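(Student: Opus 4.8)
The plan is to reduce both parts to a coordinatewise comparison and then add up. By Property~G3 we have $\star{E_n}=\star{T_1}+\cdots+\star{T_n}$, so any $x\in\star{E_n}$ may be written $x=x_1+\cdots+x_n$ with $x_i\in\star{T_i}$, which is exactly the data in the statement. Since each $T_i$ sits inside the positive cone, transfer (Properties~S1 and A1) gives $0<x_i$ in $\star{G}$ for every $i$, and Proposition~\ref{STD0} shows each $x_i$ is nearstandard, so $\st(x_i)$ is defined and, because $\st$ is a homomorphism (Property~T2), $\st(x)=\st(x_1)+\cdots+\st(x_n)$.

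The first key step is the pointwise bound $\st(x_i)\le x_i$. By Proposition~\ref{STD0} either $x_i\in T_i$, in which case $\st(x_i)=x_i$, or $x_i\approx 0$, in which case $\st(x_i)=0<x_i$ (using $0<x_i$ from the previous paragraph). So in every coordinate $\st(x_i)\le x_i$, and this inequality is strict precisely when $x_i\notin T_i$.

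For part~(i) I would simply add these $n$ inequalities. Working in $\star{G}$, which is again an ordered group by transfer, the hypothesis that the order respects the operation lets me combine $\st(x_1)\le x_1$ and $\st(x_2)\le x_2$ into $\st(x_1)+\st(x_2)\le x_1+x_2$, and an easy induction then yields $\st(x)=\st(x_1)+\cdots+\st(x_n)\le x_1+\cdots+x_n=x$.

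The delicate point, and the step I expect to be the main obstacle, is the strictness in part~(ii): since $G$ need not be abelian I cannot reorder the summands to isolate the one strict coordinate. The plan is instead to pass from $\st(x_1)+\cdots+\st(x_n)$ to $x_1+\cdots+x_n$ by replacing $\st(x_i)$ with $x_i$ one index at a time. Writing a partial sum as $A+\st(x_i)+B$, left-translation by $A$ and right-translation by $B$ turn $\st(x_i)\le x_i$ into $A+\st(x_i)+B\le A+x_i+B$; and because a group has two-sided cancellation, each translation also preserves strict inequality. Thus the single strict step occurring at the index $k$ with $x_k\notin T_k$ is not absorbed, and a chain $a\le\cdots\le b<c\le\cdots\le d$ forces $a<d$ (again by cancellation, since equality of the endpoints would squeeze $b=c$). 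Hence $\st(x)<x$, completing~(ii).
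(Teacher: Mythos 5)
Your proof is correct and takes essentially the same route as the paper's: both rest on the coordinatewise bound from Proposition~\ref{STD0} ($\st(x_i)=x_i$ or $\st(x_i)=0<x_i$), additivity of $\st$, translation-invariance of the order, and group cancellation to keep the one strict inequality from being absorbed. The only cosmetic differences are that the paper replaces coordinates in three blocks (prefix, the distinguished $x_i$, suffix), invoking part~(i) for the outer blocks, where you substitute one coordinate at a time, and that your final ``endpoints equal forces $b=c$'' step is really antisymmetry of the partial order rather than cancellation --- a harmless mislabel.
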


\begin{proof}(i)~follows from Proposition~\ref{STD0} and additivity of $\st$.
\begin{align*}
\textrm{(ii)}~\st(x)&=\st(x_1+\cdots+x_n)\\
&=\st(x_1+\cdots+x_{i-1})+\st(x_i)+\st(x_{i+1}+\cdots+x_n)\\
&\le (x_1+\cdots+x_{i-1})+\st(x_i)+(x_{i+1}+\cdots+x_n) \quad\textrm{(by (i))} \\
&< (x_1+\cdots+x_{i-1})+x_i+(x_{i+1}+\cdots+x_n) \quad\textrm{(by Proposition~\ref{STD0})}\\
&=x \qedhere
\end{align*}
\end{proof}

\begin{proof}[Proof of Theorem~\ref{Finite}]
Otherwise there is a $g\in E_n$ with infinitely many representations.  We may suppose $n$ is as small as possible. By Property~S2 there is a representation $g=g_1+\cdots+g_n$ in $\star{T_1}+\cdots+\star{T_n}$ with at least one $g_i\not\in T_i$.  By Proposition~\ref{STDless} $g=\st(g)<g$, a contradiction.
\end{proof}

\begin{Lem}\label{NWD0}Let $T, E$ be NWD subsets of $G$ with $T$ be locally cofinite at $0$. Then $E+T$ is NWD in $G$.
\end{Lem}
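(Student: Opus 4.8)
The plan is to show the closed set $\overline{E+T}$ has empty interior (this is the definition of nowhere dense). I would do this in two stages: first prove the containment $\overline{E+T}\subseteq\overline E+T'$ (recall $A'=A\cup\{0\}$), and then show that $\overline E+T'$ has empty interior. Since the interior operation is monotone, $\operatorname{int}(\overline{E+T})\subseteq\operatorname{int}(\overline E+T')=\emptyset$, and we are done. The second stage is where the hypotheses genuinely interact and where I expect the real work to lie.

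For the containment I would argue nonstandardly. Given a standard $h\in\overline{E+T}$, Property~S3 (applied to $\{\star{u}\cap\star{(E+T)}: h\in u\in\Top\}$, which has the finite intersection property because $h$ lies in the closure) yields some $x\in\star{(E+T)}$ with $x\in\mu(h)$. By Property~G3 write $x=e+s$ with $e\in\star{E}$ and $s\in\star{T}$. Proposition~\ref{STD0} gives two cases. If $s=t\in T$, then $e=x-t\in NS(G)$ with $\st(e)=\st(x)-t=h-t$ (Properties~T1, T2), so $h-t\in\overline E$ and hence $h\in\overline E+t\subseteq\overline E+T'$. If instead $s\approx0$, then $\st(e)=h$, so $h\in\overline E\subseteq\overline E+T'$. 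This stage is routine given Proposition~\ref{STD0}.

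For the second stage, suppose toward a contradiction that some nonempty open $w\subseteq\overline E+T'$. Because $E$, and hence $\overline E$, is nowhere dense, the complement of $\overline E$ is dense, so I may pick $r\in w$ with $r\notin\overline E$. Writing $\overline E=\bigcap_{v\ni0}(\overline E+v)$ (the standard neighborhood description of the closure in a topological group), I fix an open neighborhood $v$ of $0$ with $r\notin\overline E+v$, then shrink to an open neighborhood $v''$ of $0$ satisfying $v''-v''\subseteq v$ and $r+v''\subseteq w$, and set $N=r+v''$. The inclusion $v''-v''\subseteq v$ forces $N\cap(\overline E+v'')=\emptyset$: a common point would give $r\in\overline E+(v''-v'')\subseteq\overline E+v$. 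Equivalently, $(N-\overline E)\cap v''=\emptyset$.

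This disjointness is exactly what tames the family of translates, and it is the step I expect to be the crux, since it is the only point where the order-free structure of the hypotheses is used. A translate $\overline E+t$ meets $N$ only if $t\in N-\overline E$, so the set of such $t\in T$ is contained in $T\setminus v''$, which is \emph{finite} by the LCF0 property (and the $t=0$ translate $\overline E$ misses $N$ as well). Hence on $N$ only finitely many translates are relevant, giving $N\cap(\overline E+T')=N\cap\bigcup_{i=1}^{k}(\overline E+t_i)$. Each $\overline E+t_i$ is a homeomorphic translate of the nowhere dense set $\overline E$, hence closed and nowhere dense, and a finite union of closed nowhere dense sets is nowhere dense (for finite unions no Baire hypothesis is needed). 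Since $N\subseteq w\subseteq\overline E+T'$ we obtain $N\subseteq\bigcup_{i=1}^{k}(\overline E+t_i)$, a nonempty open set inside a nowhere dense set, a contradiction. The essential point is the passage from $r\notin\overline E$ to being \emph{uniformly} separated from $\overline E$ by a neighborhood $v''$ with $v''-v''\subseteq v$; this is what converts LCF0 into local finiteness of the translates near $r$. Without it the statement fails, as $\Z+\sqrt{2}\,\Z\subseteq\R$ already shows.
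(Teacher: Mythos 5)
Your proof is correct, and it takes a genuinely different route from the paper's. The paper gives a single saturation argument: assuming $E+T$ is not NWD, it fixes $c$ in a nonempty open $v$ inside the interior of $\overline{E+T}$ with $v\cap\overline{E}=\emptyset$, applies Property~S3 to the family $A_{u,w}=u\cap(E+T)\cap\compl{\overline{E+T_w}}$ (nonempty because $E+T_w$ is a finite union of translates of $E$, hence NWD), obtains $g\in\mu(c)\cap\star{(E+T)}$ lying outside every $\star{\overline{E+T_w}}$, writes $g=e+t$, notes that $t\approx 0$ is forced, and concludes $c=\st(g)=\st(e)\in\overline{E}$, a contradiction. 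You instead factor the lemma into the nonstandard containment $\overline{E+T}\subseteq\overline{E}+T'$ (an argument parallel to the paper's proof of Theorem~\ref{ThmCPCT}) followed by a purely standard second stage showing $\overline{E}+T'$ has empty interior, in which LCF0 together with the uniform separation $v''-v''\subseteq v$ reduces matters on $N=r+v''$ to a finite union of closed NWD translates $\overline{E}+t_i$. Both proofs thus rest on the same finite-union-of-translates observation, deployed in different places: the paper uses it to show $A_{u,w}\neq\emptyset$, you use it to finish. Your route buys a standalone structural fact ($\overline{E+T}\subseteq\overline{E}+T'$, a closure formula for the sumset) and a second stage requiring no saturation, readable with no nonstandard background; the paper's argument is more compact and needs neither the topological-group closure formula $\overline{A}=\bigcap_{v\ni 0}(A+v)$ nor the neighborhood bookkeeping around $v''$. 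One small repair: since the paper does not assume $G$ is abelian, your set $N-\overline{E}$ should be $-\overline{E}+N$; this is harmless, because the deduction you actually use (if $t\in v''$ then $\overline{E}+t\subseteq\overline{E}+v''$, which is disjoint from $N$, so every relevant $t$ lies in the finite set $T_{v''}$) is stated with the correct sidedness.
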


\begin{proof}
Suppose $E+T$ is not NWD.  There is then a nonempty open subset $v$ of the interior of the closure $\overline{E+T}$.  Since $E$ is NWD, we may assume $v\cap\overline{E}=\emptyset$.  Let $c\in v$.  For $u, w\in\Top$ let
$$A_{u,w}=u\cap(E+T)\cap\compl{\overline{E+T_w}}$$
If $c\in u\subseteq v$ and $0\in w$ then $E+T_w$ is a finite union of NWD sets, hence itself NWD, and $u\setminus\overline{E+T_w}$ is a nonempty open subset of $\overline{E+T}$, so must contain an element of ${E+T}$.  It follows that $A_{u,w}$ is nonempty.  Since $A_{u,w}$
is decreasing in both $u$ and $w$, $\{A_{u,w} : u,w\in\Top, 0\in w, c\in u\subseteq v\}$ has the finite intersection property, so there is a $g$ in $\bigcap_{u,w\in\Top, 0\in w, c\in u\subseteq v}\star{A_{u,w}}$.

Then $g\in\star{(E+T)}$, so $g=e+t$ for some $e\in\star{E}, t\in\star{T}$.  Since for any neighborhood $w$ of $0$ $t\notin\star{T_w}$, $t\approx 0$.  Evidently $g\in\mu(c)$, so $c\approx e+t\approx e$.  Thus $\star{E}\cap\mu(c)\neq\emptyset$, so $c\in\overline{E}$, a contradiction.
\end{proof}

\begin{proof}[Proof of Theorem~\ref{NWD}] This follows from Lemma~\ref{NWD0} and induction.\end{proof}

\begin{proof}[Proof of Theorem~\ref{Decrease}]Let $S$ be an infinite subset of $E_n$.  Since $S$ is infinite, there is an element $s\in \star{S}\setminus S$.  Let $g=\st(s)$.  By Proposition~\ref{STDless}, $g<s$.  Define a decreasing sequence $g_m$ in $S$ as follows.

The set $A_1=\{x\in S : g<x\}$ is nonempty, since $s\in\star{A_1}$.  Let $g_1\in A_1$.

Given $g_m\in S$ with $g<g_m$, let $A_{m+1}=\{x\in S : g<x\}\cap\SUa{g_m}$.  Since $g\in \SUa{g_m}$ and $\SUa{g_m}$ is open, $s\in\star{A_{m+1}}$, so $A_{m+1}\neq\emptyset$.  Let $g_{m+1}$ be an element of $A_{m+1}$.

Then the sequence $g_m$ is a decreasing sequence in $S$ by construction.
\end{proof}

\end{document}